\theoremstyle{definition}
\newtheorem{theorem}{Theorem}[section]
\newtheorem{definition}{Definition}[section]
\newtheorem{example}{Example}[section]
\newtheorem{remark}{Remark}[section]
\title{Logical connectives of fuzzy soft set theory}
\date{}
\begin{document}
\author{Santanu Acharjee$^1$ and Sidhartha Medhi$^2$\\
$^{1,2}$Department of Mathematics\\
Gauhati University\\
Guwahati-781014, Assam, India.\\
e-mails: $^1$sacharjee326@gmail.com,
$^2$sidharthamedhi365@gmail.com\\
Orchid: $^1$0000-0003-4932-3305,
$^2$0009-0001-6692-3647}

\maketitle
{\bf Abstract:} 
Soft set theory, introduced by Molodtsov [Molodtsov, D. (1999). Soft set theory—first results. Comput. Math. Appl., 37(4-5), 19-31], provides a flexible framework for managing uncertainty and vagueness, addressing limitations in traditional approaches such as fuzzy set theory, rough set theory, and probability theory. Over time, fuzzy soft set theory has emerged as a significant extension, blending the principles of fuzzy set theory and soft set theory to support applications in various decision-making processes. This study revisits fuzzy soft set theory, addressing conceptual errors and inaccuracies in the definitions of t-norm, t-conorm, strong negation, and implication that deviated from Molodtsov's foundational principles. Corrected definitions—fuzzy soft t-norm, fuzzy soft t-conorm, fuzzy soft  negation, and  fuzzy soft implication—are proposed to ensure theoretical rigor. The paper rectifies conceptual errors in prior work by Ali and Shabir [Ali, M. I.,  Shabir, M. (2013). Logic connectives for soft sets and fuzzy soft sets. IEEE Transactions on Fuzzy Systems, 22(6), 1431-1442.] and introduces refined results to strengthen the logical framework, providing a consistent foundation for future research and hybrid model development in this domain.\\

{\bf Keywords: } Fuzzy soft set; soft set; fuzzy soft t-norm; fuzzy soft t-conorm, fuzzy soft negation; fuzzy soft implication.\\

{\bf 2020 AMS classifications: } 03B52; 03E72;  94D05.\\

\section{Introduction}
Soft set theory, introduced by Molodtsov \cite{1} in 1999, is designed to deal with uncertainties and vagueness. Conventional methods, such as interval mathematics, fuzzy set theory, rough set theory, and probability theory, face challenges in managing uncertainties due to their dependence on specific parameterizations, membership values, etc. \cite{1}. In comparison, soft set theory provides a more flexible and generalized approach, linking attributes with their approximations without requiring predefined membership values. Since Molodtsov's introduction of soft set theory, many researchers have expanded its concepts into various generalized forms, exploring its operations, relations, and applications in diverse fields. Maji et al. \cite{7} introduced the concept of fuzzy soft theory and established various fundamental notions, such as complement, union, intersection, etc., of fuzzy soft sets to examine several key properties.\\

Moreover, many researchers have adopted various notions proposed by Maji et al. \cite{7} and developed many new concepts in the area of fuzzy soft set theory. However, compared to Molodtsov's studies \cite{2,3,4,10}, their results are found to be incorrect. In \cite{3}, Molodtsov defined the correct concepts related to various operations in soft set theory. In \cite{2, 3}, it is clearly stated that many researchers have introduced new operations and related results for soft sets without considering the specifics of the definition of a soft set and its related concepts. Molodtsov \cite{2,3} mentioned, with proper justifications, that Maji et al. \cite{11} introduced incorrect definitions of the empty set, complement, union, intersection, and subset in soft set theory. Yet, almost all researchers have been following these incorrect notions, thus making soft set theory and related hybrid structures erroneous in every aspect. In \cite{7}, Maji et al. defined the soft subset of a soft set as follows:\\

``For two soft sets $(F, A)$ and $(G, B)$ over a common universe set $U$, we say that $(F, A)$ is a soft subset of $(G, B)$ if $A\subset B$ , and $\forall \epsilon \in A$, $F(\epsilon)$ and $G(\epsilon)$ are identical approximations." \\

But Molodtsov \cite{3,4} mentioned that the condition $A\subset B$ or $A\subseteq B$ leads to a lack of conceptual correctness, since he did not conceptualize a soft subset anywhere in the literature of soft set theory \cite{1,2,3} or in his book \cite{12}. According to him \cite{2,3,12}, correct operations should be defined through the families of sets $\tau(S,A)$. One can refer to Acharjee and Oza \cite{4}, Acharjee and Molodtsov \cite{9}, and Acharjee and Medhi \cite{10} for more information related to the above facts. Molodtsov \cite{3} also stated that ``...the generation of hybrid approaches combining soft sets and fuzzy sets should be carried out very carefully, taking into account differences between the concepts of equivalence." Therefore, it is crucial to recognize and resolve these discrepancies to ensure that the resulting hybrid structures are both theoretically robust and practically effective.\\

 Ali and Shabir \cite{6} proposed logical connectives, such as t-norms, t-conorms, strong negation, and implication, in fuzzy soft set theory. These definitions have been found to deviate from the core principles of Molodtsov's original theory of soft sets, despite their goal of improving the mathematical framework of fuzzy soft sets and related logical connectives. The variation required a careful re-examination and revision of these notions in fuzzy soft set theory to preserve consistency within the theoretical framework of soft set theory. It is important to note that Molodtsov's definitions of the null soft set \cite{2} and the absolute soft set \cite{3} are conceptually different from the available definitions of the null soft set \cite{11} and the absolute soft set \cite{11}. Hence, there are also similar conceptual errors in the definitions of the null fuzzy soft set \cite{11} and the absolute fuzzy soft set \cite{11}. All these conceptual errors are present in the literature of fuzzy soft set theory, except in Acharjee and Medhi \cite{10}. The reflections of these errors can also be found in definitions 25, 27, 28, 30, and related results of Ali and Shabir \cite{6}. We discuss these in the next section.\\

To resolve the aforementioned disparities, this study provides correct definitions of logical connectives such as t-norm, t-conorm, strong negation, and implication in fuzzy soft set theory. We refer to these corrected definitions as fuzzy soft t-norm, fuzzy soft t-conorm, fuzzy soft negation, and fuzzy soft implication in fuzzy soft set theory.\\

These definitions and related results have been thoughtfully developed in accordance with the fundamental principles of Molodtsov's soft set theory \cite{2,3,10,12}. The contributions of this work are twofold. First, it rectifies the inaccuracies in the previously proposed definitions and results of Ali and Shabir \cite{6} related to t-norm, t-conorm, strong negation, and implication in fuzzy soft set theory, ensuring their theoretical soundness and practical relevance. Second, it advances the logical development of fuzzy soft sets, providing new insights and tools for future research in this domain. By doing so, this paper aims to serve as a foundational piece of research for scholars and practitioners seeking correct and consistent methodologies within the logical connectives of fuzzy soft set theory.\\

The paper is divided into four sections. Section 1 is the introduction. Preliminary definitions are provided in Section 2. Section 3 presents the main results of this research, and Section 4 concludes the paper.\\

\section{Preliminaries}
A soft set is a mathematical structure consisting of two essential parts: a universal set $X$ and a set of parameters $A$. The basic concepts and related results of soft set theory, fuzzy soft set theory, and their associated theories are cited in this section. These will be used in the next section.\\

\begin{definition}\cite{1, 2,3}
A soft set defined over a universal set $X$ is a pair $(S,A)$ where $S$ is a mapping from the set of parameters $A$ into the set of all subsets of $X$, i.e., $S : A \to 2^X$. In fact, a soft set is a parametrized family of subsets. For a given soft set $(S,A)$, the family $\tau(S,A)$ can be defined as $\tau(S,A)=\{S(a): a \in A\}$. \\
    
\end{definition}

\begin{definition}\cite{10}
    A fuzzy soft set is a pair $(S,A)$ defined over a universal set $X$ if and only if $S$ is a mapping from the set of parameters $A$ into the set of all fuzzy subsets of $X$, i.e., $S : A\to P(X)$. So, in a fuzzy soft set $(S, A)$, each $S(a)$, $\forall a \in A$, is a fuzzy set. For a given fuzzy soft set (S,A), the family $\tau (S,A)$ can be defined as the set $\tau (S,A)=\{S(a): a\in A \}$.\\

    Here, $P(X)$ denotes the set of all fuzzy sets defined over  $X$.\\
\end{definition}

\begin{definition}\cite{10}
    Let $(S, A)$ be a fuzzy soft set defined over a universal set $X$. Then, the complement of $(S, A)$ is denoted by $C(S, A)$ and defined as $CS(a)= \overline {S(a)}$, $\forall a\in A$.\\
\end{definition}

\begin{definition}\cite{10}
    Let $(S, A)$ and $(G,B)$ be two fuzzy soft sets defined over a universal set $X$. Then, the binary operation union of these fuzzy soft sets is again a fuzzy soft set $(U, A \times B)$ over the same universal set $X$, i.e., $(S,A) \cup (G,B)=(U, A\times B)$ and it is defined by     $U(a, b)=S(a)\cup G(b)$, $\forall (a ,b) \in A\times B$.\\

Here, the membership value of  an element $x$ of the fuzzy set $S(a)\cup G(b)$ is defined as $\mu_{U(a,b)}(x) = \mu_{S(a)\cup G(b)}(x) = max\{\mu_{S(a)}(x), \mu_{G(b)}(x)\}$, $\forall x\in X$.\\
\end{definition}

\begin{definition}\cite{10}
     Let $(S, A)$ and $(G,B)$ be two fuzzy soft sets defined over a universal set $X$. Then, the binary operation intersection of these fuzzy soft sets is again a fuzzy soft set $(I, A \times B)$ over the same universal set X, i.e., $(S,A) \cap (G,B)=(I, A\times B)$ and it is defined by $I(a, b)=S(a)\cap G(b)$ , $\forall (a ,b) \in A\times B$.\\

Here, the membership value of an element $x$ of the fuzzy set $S(a)\cap G(b)$ is defined as $\mu_{I(a,b)}(x) = \mu_{S(a)\cap G(b)}(x) = min\{\mu_{S(a)}(x), \mu_{G(b)}(x)\}$, $\forall x\in X$.\\
\end{definition}

\begin{definition}\cite{13}
    A fuzzy implication $R$ is a binary operation from $[0, 1]\times [0, 1] \to [0, 1]$ such that the following properties hold:

    (i) $R(p, r)\geq R(q, r)$ if $q\geq p$;
    
    (ii) $R(p, r)\leq R(p, s)$ if $r\leq s$;

    (iii) $R(1, t)=t$, $\forall t\in [0, 1]$;

    (iv) $R(0, t)=1$, $\forall t\in [0, 1]$;

    (v) $R(p, R(q, r))=R(q, R(p, r))$.\\
\end{definition}

Ali and Shabir \cite{6} introduced logical connectives in soft set theory and fuzzy soft set theory. According to their definitions \cite{6}, the t-norm, t-conorm, strong negation, and implication in fuzzy set theory are as follows:\\

\begin{definition}\cite{6}
    A binary operation $T$ on $(FS)_E$ is called a t-norm on $(FS)_E$, if the following  hold:\vspace{1.5mm}
    
    i) $T(\mathcal{U}_E, (F, A))= (F, A)$,

    ii) $T((F, A), (G, B))=T((G, B), (F, A))$,

    iii) $T((F, A), T((G, B), (H, C))) = T(T((F, A), (G, B)), (H, C))$,

    iv) if $(F, A)\tilde{\subset} (G, B)$ and  $(H, C)\tilde{\subset} (K, D)$, then $T((F, A), (H, C)) \tilde{\subset} T((G, B), (K,\\ D))$.\\

    \vspace{1.5mm}

Here, $(FS)_E$ is the collection of all fuzzy soft sets defined over a universal set $U$, $\mathcal{U}_E$ is an absolute fuzzy soft set  and  ``$(F, A)$ is a fuzzy soft subset of $(G, B)$"  is represented as $(F, A)\tilde{\subset} (G, B)$ in the sense of  Maji et al. \cite{7}.\vspace{1.5mm}
\end{definition}

\begin{definition}\cite{6}
    Let $\mathcal{N}:(FS)_E \to (FS)_E$ be a map. It is called a strong negation if the following hold:

    (i) $\mathcal{N}(U_A)={\emptyset}_A$ and $\mathcal{N}({\emptyset}_A)=U_A$,

    (ii) if $(F, A) \tilde{\subset} (G, A)$ then $\mathcal{N}(G, A)\tilde{\subset} \mathcal{N}(F, A)$,

    (iii) $\mathcal{N}(\mathcal{N}(F, A))=(F, A)$.
\end{definition}

\begin{definition}\cite{6}
    A binary operation $T$ on $(FS)_E$ is called the dual of another binary operation $S$ on $(FS)_E$ with respect to the strong negation $\mathcal{N}$, if any of the following hold:\\
   \begin{center}
       $S((F, A), (G, B))=T(\mathcal{N}(\mathcal{N}(F, A), \mathcal{N}(G, B)))$,
   \end{center} 
    
    and\\

    \begin{center}
         $T((F, A), (G, B))=S(\mathcal{N}(\mathcal{N}(F, A), \mathcal{N}(G, B)))$.\\
    \end{center}
   
    If $T$ is a t-norm, then $S$ is called a t-conorm.
\end{definition}

\begin{definition}\cite{6}  A map $S:(FS)_E \times (FS)_E \to (FS)_E$ is a t-conorm on $(FS)_E$, if and only if the following conditions hold:\vspace{1.5mm}
    
    i) $S(\emptyset_E, (F, A))= (F, A)$,

    ii) $S((F, A), (G, B))=S((G, B), (F, A))$,

    iii) $S((F, A), S((G, B), (H, C)))=S(S((F, A), (G, B)), (H, C))$,

    iv) if $(F, A)\tilde{\subset} (G, B)$ and  $(H, C)\tilde{\subset} (K, D)$, then $S((F, A), (H, C)) \tilde{\subset} S((G, B), (K, \\D))$.\vspace{1.5mm}

Here, $\emptyset_E$ is the null fuzzy soft set.
\end{definition}

In these aforementioned definitions of  t-norm and t-conorm, conditions (i) and (iv) are not correct compared to Molodtsov's soft set theory \cite{1,2,3,4,10, 12}. Similarly, conditions (i) and (ii) of definition 2.8 are not correct. We already discussed these in the previous section. Moreover, Molodtsov clearly wrote that, in the case of binary operations, the set of parameters of the resulting soft set for any two soft sets should be the Cartesian product of the sets of parameters of the original soft sets. Thus, the conditions (i) of definitions 2.7 and 2.10, respectively, are not correct since $E\times A\neq A$. Again, as we discussed previously, the notion of a soft subset does not exist in soft set theory \cite{2,3, 12}. Hence, the concept of a fuzzy soft subset also does not exist in fuzzy soft set theory since the only difference, with soft set theory, is that attribute approximations are fuzzy sets and operations are in terms of fuzzy sense in fuzzy soft theory. Thus, the conditions (iv) of definitions 2.7 and 2.10 have no meaning in terms of fuzzy soft theory. Hence, we can say that definitions of t-norm and t-conorm are not correct from the perspective of fuzzy soft set theory and related structures. Thus, we want to propose correct definitions of fuzzy soft t-norm and fuzzy soft t-conorm by following Molodtsov's correct structure of soft set theory \cite{1,2,3,4,10, 12}. They are discussed in the next section.

\section{Main Results}
In this section, we provide correct definitions of fuzzy soft t-norm, fuzzy soft t-conorm, fuzzy soft negation, and fuzzy soft implication based on Molodtsov's concepts \cite{1,2,3,4,10,12}.  However, we urge researchers not to look into soft set theory and its hybrid structures only from the perspective of mathematics, since there are several  philosophical connections behind Molodtsov's ideas on soft set theory. In \cite{2}, Molodtsov clearly mentioned the following:\\

``{\it If the soft set $(S.A)$ is given, then the family $\tau(S,A) = \{ S(a): a\in A\}$ specifies those subsets which can be approximate descriptions, and the parameter set $A$ is chosen for reasons of convenience by the person who introduces the definition of this soft set. Thus, the role of a set of parameters in the definition of a soft set is purely auxiliary. Parameters are used only as short names of subsets.}"\\

From the above comment of Molodtsov, it can be found that logical  connectives of fuzzy set theory deal with membership values, irrespective of the elements that are connected to the membership values. To define logical connectives of fuzzy soft set theory, it is easy to find that a fuzzy set, say $S(a)$, is only the approximation of the attribute, say  $a\in A$. In layman's language, we can tell that $S(a)$ is the name of the fuzzy set. Moreover, in soft set theory, sets of parameters $A\times B$, and $B\times A$ are indistinguishable \cite{2, 3, 4}. Thus, they can be replaced by each other in the theories of soft sets and hybrid structures whenever necessary. Similarly, the sets of parameters $A\times B\times C$, $A\times (B\times C)$ and $(A\times B)\times C$ are indistinguishable in soft set theory and hybrid structures. Thus, considering the aforementioned philosophical aspects of soft set theory and hybrid structures,   we have the following definitions and results on logical connectives of fuzzy soft set theory for any arbitrary sets of parameters $A, B$ and $C$:\\

\begin{definition}
    An operation $T:(A\times[0, 1])\times (B\times[0, 1]) \to (A\times B)\times[0, 1]$ is said to be a fuzzy soft t-norm if it satisfies the following conditions: 

    (i) $T((a, 1), (b, \beta))=((a, b), \beta)$,

    (ii) $T((a, \alpha), (b, 1))=((a, b), \alpha)$,
    
    (iii) $T((a, \alpha), (b, \beta))=T((b, \beta), (a, \alpha))$,

    (iv) $T((a, \alpha), T((b, \beta), (c, \gamma)))=T(T((a, \alpha), (b, \beta)), (c, \gamma))$,

    (v) if $\alpha_1 \leq \alpha_2$ and $\beta_1\leq \beta_2$, then $T((a, \alpha_1), (b, \beta_1))  \leq T((a, \alpha_2), (b, \beta_2))$.\\

    Here, $A, B, C$ are the sets of parameters, where $a\in A, b\in B,$  and $ c\in C$. Also, $\alpha, \alpha_1, \alpha_2, \beta, \beta_1, \beta_2, \gamma \in [0, 1]$.\\
\end{definition}
\begin{remark}
   Let $(S, A)$ be a fuzzy soft set defined over a universal set $X$. A pair $(a, \alpha)\in A\times [0, 1]$ indicates that $\alpha$ is the membership value of an element of $X$ in the fuzzy set $S(a)\in \tau(S, A)$.
\end{remark}

\begin{definition}
Let $(a, x), (a, y)\in A\times[0, 1]$. Then, we say

(i) $(a, x)\leq (a, y)$ if and only if $x\leq y$,

(ii) $(a, x)\geq (a, y)$ if and only if $x\geq y$.
\end{definition}

 \begin{example}
     Let $T:(A\times[0, 1])\times (B\times[0, 1]) \to (A\times B)\times[0, 1]$ be an operation defined as $T((a, x), (b, y))=((a, b), xy)$, where $A$ and $B$ are sets of parameters. Then, $T$ is a fuzzy soft t-norm.\\

 \begin{proof}
     To show that $T$ is a fuzzy soft t-norm, we first consider three fuzzy soft sets $(S, A)$, $(G, B)$ and $(H, C)$ defined over a universal set $X$. We know that the sets of parameters $A\times B\times C$, $A\times (B\times C)$ and $(A\times B)\times C$ are indistinguishable in fuzzy soft set theory. Let $a\in A, b\in B, c\in C \;$and $\; x, y, z\in [0, 1]$.\\

Let $a\in A, b\in B, \;and\; c\in C$. Also, we consider $x, y, z\in [0, 1]$. Thus, we have the following results:\\

     (i) $T((a, 1), (b, y))=((a, b), 1.y)=((a, b), y)$.\vspace{1.5mm}

     (ii) $T((a, x), (b, 1))=((a, b), x.1)=((a, b), x)$.\vspace{1.5mm}

     (iii) Since $A\times B$ and $B\times A$ are indistinguishable,  we get $T((a, x), (b, y))=((a, b), xy)=((b, a), yx)=T((b, y), (a, x))$.  Hence, $T((a, x), (b, y))=T((b, y), (a, x))$.\vspace{1.5mm}

     (iv) $T((a, x), T((b, y), (c, z)))=T((a, x), ((b, c), yz))=((a, (b, c)), x(yz))$. Since $A\times (B\times C)$ and $(A\times B)\times C$ are indistinguishable, we get $((a, (b, c)), x(yz))=(((a, b), c), (xy)z)=T(((a, b), xy), (c, z))=T(T((a, x), (b, y)), (c, z))$. Hence, $T((a, x), T((b, y), (c, z)))=T(T((a, x), (b, y)), (c, z))$.\vspace{1.5mm}

     (v) Let $x_1\leq x_2$, $y_1\leq y_2$ and $x_1, x_2, y_1, y_2 \in [0,1]$. It implies $x_1y_1\leq x_2y_2$. So, $T((a, x_1), (b, y_1))=((a, b), x_1y_1)\leq ((a, b), x_2y_2)= T((a, x_2), (b, y_2))$.\vspace{1.5mm}

     Thus, $T$ satisfies all the conditions of a fuzzy soft t-norm. Hence, we can say that $T$ is a fuzzy soft t-norm. 
 \end{proof}
 \end{example}
\begin{example}
     The following are examples of fuzzy soft t-norms:\\
    (i) $T((a, x), (b, y))=((a, b), min\{x, y\})$, \\
    (ii) $T((a, x), (b, y))=((a, b), max\{x+y-1, 0\})$.\\
\end{example}

\begin{definition}
     An operation $T':(A\times[0, 1])\times (B\times[0, 1]) \to (A\times B)\times[0, 1]$ is said to be a fuzzy soft t-conorm if it satisfies the following conditions: 

    (i) $T'((a, 0), (b, \beta))=((a, b), \beta)$,

    (ii) $T'((a, \alpha), (b, 0))=((a, b), \alpha)$,
     
    (iii) $T'((a, \alpha), (b, \beta))=T'((b, \beta), (a, \alpha))$,

    (iv) $T'((a, \alpha), T'((b, \beta), (c, \gamma)))=T'(T'((a, \alpha), (b, \beta)), (c, \gamma))$,

    (v) if $\alpha_1 \leq \alpha_2$ and $\beta_1\leq \beta_2$ then $T'((a, \alpha_1), (b, \beta_1))\\ \leq T'((a, \alpha_2), (b, \beta_2))$.

 Here, $A, B, C$ are the sets of parameters, where $a\in A, b\in B,$  and $ c\in C$. Also, $\alpha, \alpha_1, \alpha_2, \beta, \beta_1, \beta_2, \gamma \in [0, 1]$.\\

\end{definition}

\begin{example} The following are examples of fuzzy soft t-conorms:\\ (i) $T'((a, x), (b, y))=((a, b), max\{x, y\})$,\\  
(ii) $T'((a, x), (b, y))=((a, b), x+y-xy)$,\\ 
(iii) $T'((a, x), (b, y))=((a, b), min\{1, x+y\})$ 
\end{example}

\begin{theorem}
    An operation $T:(A\times[0, 1])\times (B\times[0, 1]) \to (A\times B)\times[0, 1]$, defined as $T((a, x), (b, y)) =  ((a, b), t(x, y))$, is fuzzy soft t-norm if $t$ is a fuzzy t-norm.
\end{theorem}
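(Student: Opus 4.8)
The plan is to verify, one at a time, the five defining conditions of a fuzzy soft t-norm from Definition 3.1, in each case reducing the assertion about $T$ to the corresponding property of the classical fuzzy t-norm $t$. Recall that a fuzzy t-norm $t:[0,1]\times[0,1]\to[0,1]$ is commutative, associative, monotone in both arguments, and satisfies the boundary condition $t(x,1)=x$ for all $x\in[0,1]$; by commutativity this also yields $t(1,y)=y$. First I would observe that $T$ is well defined: for $(a,x)\in A\times[0,1]$ and $(b,y)\in B\times[0,1]$ the pair $((a,b),t(x,y))$ indeed lies in $(A\times B)\times[0,1]$, so the codomain is respected.

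For conditions (i) and (ii), I would simply compute $T((a,1),(b,\beta))=((a,b),t(1,\beta))=((a,b),\beta)$ and $T((a,\alpha),(b,1))=((a,b),t(\alpha,1))=((a,b),\alpha)$, using the boundary condition of $t$ (together with its commutativity for the first identity). For condition (iii), commutativity of $T$, I would invoke Molodtsov's principle, already used in the excerpt, that the parameter sets $A\times B$ and $B\times A$ are indistinguishable in fuzzy soft set theory, so that $((a,b),t(\alpha,\beta))$ and $((b,a),t(\beta,\alpha))$ denote the same object; combining this with $t(\alpha,\beta)=t(\beta,\alpha)$ gives $T((a,\alpha),(b,\beta))=T((b,\beta),(a,\alpha))$.

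For condition (iv), associativity, I would expand both sides as $T((a,\alpha),T((b,\beta),(c,\gamma)))=((a,(b,c)),t(\alpha,t(\beta,\gamma)))$ and $T(T((a,\alpha),(b,\beta)),(c,\gamma))=(((a,b),c),t(t(\alpha,\beta),\gamma))$, and then use the indistinguishability of $A\times(B\times C)$ and $(A\times B)\times C$ together with the associativity $t(\alpha,t(\beta,\gamma))=t(t(\alpha,\beta),\gamma)$ to conclude that the two expressions coincide. For condition (v), given $\alpha_1\leq\alpha_2$ and $\beta_1\leq\beta_2$, monotonicity of $t$ yields $t(\alpha_1,\beta_1)\leq t(\alpha_2,\beta_2)$, and since both outputs carry the identical parameter pair $(a,b)$, Definition 3.3 applies and gives $T((a,\alpha_1),(b,\beta_1))\leq T((a,\alpha_2),(b,\beta_2))$. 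Note that this theorem subsumes Example 3.1 (taking $t(x,y)=xy$) and Example 3.4.

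The verification is essentially routine; the only point requiring genuine care—and the step I expect to be the main, if modest, obstacle—is the treatment of the first coordinates in conditions (iii) and (iv), where one must explicitly appeal to the identification of $A\times B$ with $B\times A$ and of $A\times(B\times C)$ with $(A\times B)\times C$, since otherwise the required equalities would fail literally as set-theoretic equalities. Everything else follows immediately from the axioms of the classical fuzzy t-norm $t$ and from the ordering convention of Definition 3.3.
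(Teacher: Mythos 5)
Your proposal is correct and follows essentially the same route as the paper: both verify the five conditions of Definition 3.1 coordinatewise, reducing each to the boundary, commutativity, associativity, and monotonicity axioms of the classical fuzzy t-norm $t$, and both invoke the indistinguishability of $A\times B$ with $B\times A$ and of $A\times(B\times C)$ with $(A\times B)\times C$ for conditions (iii) and (iv). The only trivial slip is your citation of ``Definition 3.3'' for the ordering convention on $A\times[0,1]$, which in the paper is Definition 3.2; the paper also spells out the monotonicity step (v) as a two-link chain through $t(x_1,y_2)$, whereas you compress it into one appeal to joint monotonicity, but these are the same argument.
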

\begin{proof}
     Consider a fuzzy t-norm $t:[0, 1]\times [0, 1] \to [0, 1]$. Then we have the following conditions:
    
    (i) $t(x, 1)=x$ and $t(1, y)=y$, $\forall x, y \in [0, 1]$.
    
    (ii) $t(x, y)=t(y, x)$, $\forall x, y \in [0, 1]$.
    
    (iii) $t(x, t(y, z))=t(t(x, y), z)$, $\forall x, y, z\in [0, 1]$.

    (iv) $x \leq y \implies t(z, x)\leq t(z, y)$, $\forall x, y, z\in [0, 1]$.\\

Now, to prove that $T$ is a fuzzy soft t-norm, we need to show that $T$ satisfies the conditions of definition 3.1. Let $a\in A, b\in B, c\in C \;$ and $\; x, y, z\in [0, 1]$. Thus, \vspace{1.5mm}

(i) $T((a, 1), (b, y))=((a, b), t(1, y))=((a, b), y)$.

(ii) $T((a, x), (b, 1))=((a, b), t(x, 1))=((a, b), x)$.

(iii) $T((a, x), (b, y))=((a, b), t(x, y))=((a, b), t(y, 
x))=((b, a), t(y, x))\\=T((b, y), (a, x))$.

(iv) $T((a, x), T((b, y), (c, z)))=T((a, x),((b, c), t(y, z)))=((a, (b, c)), t(x, t(y, z)))\\=(((a, b), c), t(t(x, y), z))=T(((a, b), t(x, y)), (c, z))=T(T((a, x), (b, y)), (c, z))$.

(v) Let $x_1\leq x_2$ and $y_1\leq y_2$, where $x_1, x_2, y_1, y_2 \in [0, 1]$. Now, $T((a, x_1), (b, y_1))=((a, b), t(x_1, y_1)) \leq ((a, b), t(x_1, y_2)) \leq ((a, b), t(x_2, \\y_2))=T((a, x_2), (b, y_2))$.

Thus, the given operation $T$ satisfies all the conditions of a fuzzy soft t-norm. Hence, $T$ is a fuzzy soft t-norm.
\end{proof}

\begin{theorem}
     An operation $T':(A\times[0, 1])\times (B\times[0, 1]) \to (A\times B)\times[0, 1]$, defined as $T'((a, x), (b, y)) = ((a, b), s(x, y))$, is fuzzy soft t-conorm if $s$ is a fuzzy t-conorm.
\end{theorem}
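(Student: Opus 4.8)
The plan is to mirror the proof of Theorem 3.1 almost verbatim, replacing the fuzzy t-norm $t$ with the fuzzy t-conorm $s$ and swapping the boundary element $1$ for $0$. First I would recall the defining properties of a fuzzy t-conorm $s:[0,1]\times[0,1]\to[0,1]$: the boundary conditions $s(x,0)=x$ and $s(0,y)=y$ for all $x,y\in[0,1]$; commutativity $s(x,y)=s(y,x)$; associativity $s(x,s(y,z))=s(s(x,y),z)$; and monotonicity $x\le y\implies s(z,x)\le s(z,y)$. These are precisely what is needed to establish each clause of Definition 3.3.

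Next I would verify the five conditions of Definition 3.3 in turn for the operation $T'((a,x),(b,y))=((a,b),s(x,y))$. Conditions (i) and (ii) follow at once from the boundary conditions, since $T'((a,0),(b,\beta))=((a,b),s(0,\beta))=((a,b),\beta)$ and $T'((a,\alpha),(b,0))=((a,b),s(\alpha,0))=((a,b),\alpha)$. Condition (iii) uses commutativity of $s$ together with the convention that the parameter sets $A\times B$ and $B\times A$ are indistinguishable in fuzzy soft set theory: $T'((a,x),(b,y))=((a,b),s(x,y))=((b,a),s(y,x))=T'((b,y),(a,x))$. Condition (iv) uses associativity of $s$ together with the indistinguishability of $A\times(B\times C)$ and $(A\times B)\times C$: expanding both sides yields $((a,(b,c)),s(x,s(y,z)))$ and $(((a,b),c),s(s(x,y),z))$, which coincide. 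Condition (v) uses monotonicity of $s$ twice, exactly as in Theorem 3.1: from $x_1\le x_2$ and $y_1\le y_2$ we obtain $s(x_1,y_1)\le s(x_1,y_2)\le s(x_2,y_2)$, hence $T'((a,x_1),(b,y_1))\le T'((a,x_2),(b,y_2))$ by Definition 3.2.

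Since there is no genuine difficulty here — the argument is essentially a line-by-line transcription of the t-norm case with the obvious substitutions — the only point requiring any attention is consistent bookkeeping of the auxiliary parameter components and the appeal to the indistinguishability conventions stated just before Definition 3.1; everything else reduces directly to the defining axioms of a fuzzy t-conorm. I would therefore present the proof compactly, possibly remarking explicitly that it is dual to that of Theorem 3.1.
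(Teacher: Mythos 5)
Your proposal is correct and follows essentially the same route as the paper's own proof: both verify the five conditions of the fuzzy soft t-conorm definition directly from the boundary, commutativity, associativity, and monotonicity axioms of the fuzzy t-conorm $s$, invoking the indistinguishability of $A\times B$ with $B\times A$ and of $A\times(B\times C)$ with $(A\times B)\times C$ at the same points. No gaps.
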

\begin{proof}
     Let $s:[0, 1]\times [0, 1] \to [0, 1]$ be a fuzzy t-conorm. Thus, we have the following conditions:
    
    (i) $s(x, 0)=x$ and $s(0, y)=y$, $\forall x, y \in [0, 1]$.
    
    (ii) $s(x, y)=s(y, x)$, $\forall x, y \in [0, 1]$.
    
    (iii) $s(x, s(y, z))=s(s(x, y), z)$, $\forall x, y, z\in [0, 1]$.

    (iv) $x \leq y \implies s(z, x)\leq s(z, y)$, $\forall x, y, z\in [0, 1]$.\\
    
Now, for any $a\in A, b\in B, c\in C $; and $ x, y, z\in [0, 1]$, we get,\\

(i) $T'((a, 0), (b, y))=((a, b), s(0, y))=((a, b), y)$.\\

(ii) $T'((a, x), (b, 0))=((a, b), s(x, 0))=((a, b), x)$.\\

(iii) $T'((a, x), (b, y))=((a, b), s(x, y))=((a, b), s(y, 
x))=((b, a), s(y, x))\\=T'((b, y), (a, x))$.\\

(iv) $T'((a, x), T'((b, y), (c, z)))=T'((a, x),((b, c), s(y, z)))=((a, (b, c)), s(x, s(y, z)))\\=(((a, b), c), s(s(x, y), z))=T'(((a, b), s(x, y)), (c, z))=T'(T'((a, x), (b, y)), (c, z))$.\\

(v) Let $x_1\leq x_2$ and $y_1\leq y_2$, where $x_1, x_2, y_1, y_2 \in [0, 1]$. Now, $T'((a, x_1), (b, y_1))=((a, b), s(x_1, y_1)) \leq ((a, b), s(x_1, y_2)) \leq ((a, b), s(x_2, y_2))=T'((a, x_2), (b, y_2))$.\\

Thus,  $T'$ is a fuzzy soft t-norm.\\
\end{proof}
\begin{definition}
    Let $T:(A\times[0, 1])\times (B\times[0, 1]) \to (A\times B)\times[0, 1]$ be a fuzzy soft t-norm defined as $T((a, x), (b, y)) = ((a, b), f(x, y))$, where $f:[0, 1]\times [0, 1] \to [0, 1]$ is a function. Then, the fuzzy soft t-norm $T$ is said to be continuous fuzzy soft t-norm if the function $f$ is continuous. 
\end{definition}
\begin{example}
    The following are examples of continuous fuzzy soft t-norms:\\
    
(i) $T((a, x), (b, y)) = ((a, b), xy)$,

    (ii) $T((a, x), (b, y)) = ((a, b), min\{x, y\})$.
\end{example}
\begin{definition}
    Let $T':(A\times[0, 1])\times (B\times[0, 1]) \to (A\times B)\times[0, 1]$ be a fuzzy soft t-conorm defined as  $T'((a, x), (b, y)) = ((a, b), g(x, y))$, where $g:[0, 1]\times [0, 1] \to [0, 1]$ is a function. Then, the fuzzy soft t-conorm $T'$ is said to be continuous fuzzy soft t-conorm if the function $g$ is continuous. 
\end{definition}
\begin{example}
     Let $T':(A\times[0, 1])\times (B\times[0, 1]) \to (A\times B)\times[0, 1]$ be a fuzzy soft t-conorm defined as $T'((a, x), (b, y)) = ((a, b), max\{x, y\})$. Then, $T`$ is a continuous fuzzy soft t-conorm.
\end{example}
\begin{definition}
    An operation $N:A\times[0, 1] \to A\times[0, 1]$, where $A$ be a set of parameters, is said to be a fuzzy soft negation if it satisfies the following conditions:

    (i) $N(a, 1)=(a, 0)$ and $N(a, 0)=(a, 1)$, $\forall a\in A$.

    (ii) if $x\leq y$, then $N(a, x)\geq N(a, y)$, $\forall a\in A$ and $x, y \in [0, 1]$.

    (iii) $N(N(a, x))=(a, x)$, $\forall a\in A$ and $x \in [0, 1]$.
\end{definition}

\begin{example}
    Let $N:A\times[0, 1] \to A\times[0, 1]$ be an operation defined as $N(a, x)=(a, 1-x)$, $\forall a\in A$ and $x\in [0, 1]$. Then, $N$ is a fuzzy soft negation.
\end{example}

\begin{definition}
    Let $N:A\times[0, 1] \to A\times[0, 1]$ be a fuzzy soft negation. Then, $(a, x)$ is an equilibrium point of $N$ if $N(a, x)=(a, x)$.
\end{definition}

Now, we prove the following theorems:\\

\begin{theorem}
    Every fuzzy soft negation operation $N:A\times[0, 1] \to A\times[0, 1]$ has at most $n$ equilibrium points, where $n$ be the cardinality of the set of parameters $A$.
\end{theorem}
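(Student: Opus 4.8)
The plan is to show that, for each fixed parameter $a \in A$, the operation $N$ admits at most one equilibrium point whose first coordinate equals $a$. Since the first coordinate of any equilibrium point lies in $A$ and $|A| = n$, ranging over the parameters then yields the bound $n$.

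First I would fix $a \in A$ and argue by contradiction: suppose $(a, x_1)$ and $(a, x_2)$ are two \emph{distinct} equilibrium points of $N$, so $x_1 \neq x_2$, and, after relabelling, $x_1 < x_2$. By Definition 3.2 this means $(a, x_1) \leq (a, x_2)$, so condition (ii) of Definition 3.6 applies (note that here both sides $N(a,x_1)$ and $N(a,x_2)$ will share the first coordinate $a$, so the order of Definition 3.2 is meaningful) and gives $N(a, x_1) \geq N(a, x_2)$.

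Next I would invoke the equilibrium hypothesis $N(a, x_1) = (a, x_1)$ and $N(a, x_2) = (a, x_2)$; substituting into the inequality above yields $(a, x_1) \geq (a, x_2)$, which by Definition 3.2 forces $x_1 \geq x_2$, contradicting $x_1 < x_2$. Hence no parameter $a$ can carry two equilibrium points, and the total number of equilibrium points of $N$ is at most the number of parameters, i.e.\ at most $n$.

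I do not anticipate a genuine obstacle: the claim is essentially a one-line consequence of the antitonicity condition (ii) of Definition 3.6 together with the coordinatewise order of Definition 3.2, and conditions (i) and (iii) of the negation are not even needed for the upper bound. The only point to flag carefully in the write-up is that the relation of Definition 3.2 compares only pairs with a common first coordinate — but that is precisely the configuration occurring at equilibrium points, so no extra hypothesis is required. (A heavier alternative would be to first establish that $N$ acts as $(a,x)\mapsto(a,n_a(x))$ for a strong negation $n_a$ on $[0,1]$, which is then continuous, strictly decreasing, and has a unique fixed point; but this proves more than is asked, since only an upper bound is claimed.)
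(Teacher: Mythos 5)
Your proposal is correct and follows essentially the same route as the paper: fix a parameter, use the antitonicity condition (ii) of the fuzzy soft negation to show that two distinct equilibrium points over the same parameter would force contradictory inequalities on their membership values, and then sum over the $n$ parameters. If anything, your version is slightly cleaner, since it compares $N(a,x_1)$ and $N(a,x_2)$ directly via Definition 3.2 rather than forming the formal differences $N(a_1,x)-(a_1,x)$ that appear in the paper's write-up.
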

\begin{proof}
    Let $N:A\times[0, 1] \to A\times[0, 1]$ be a fuzzy soft negation and let $A=\{a_1, a_2, \dots, a_n\}$ be the set of parameters. We know that if $(a, x)\in A\times [0, 1]$ be an equilibrium point of $N$, then we have $N(a, x)=(a, x)$ or $N(a, x)-(a, x)=0$.

    We first prove that $N$ has at most one equilibrium point for each $a_i \in A$, where $i=1, 2, \dots, n$. 
    
    For the above purpose, we assume that $N$ has two different equilibrium points with respect to the parameter $a_1$, say $(a_1, x), (a_1, y) \in A\times [0, 1]$. Therefore, $(a_1, x)$ and $ (a_1, y)$ are the solutions of the equation $N(a, x)-(a, x)=0$, i.e., $N(a_1, x)-(a_1, x)=0$ and $N(a_1, y)-(a_1, y)=0$. This implies that $N(a_1, x)-(a_1, x)=N(a_1, y)-(a_1, y)$. Here, we get the following cases:\\

    \textbf{Case I :} if $x\leq y$, then we get $N(a, x)\geq N(a, y)$. This implies that $N(a_1, x)-(a_1, x) \geq N(a_1, y)-(a_1, y)$, which contradicts our assumption. \\

    \textbf{Case II :} if $x\geq y$, then we get $N(a, x)\leq N(a, y)$. This implies that $N(a_1, x)-(a_1, x) \leq N(a_1, y)-(a_1, y)$, which is again a contradiction. Thus, $N$ has at most  one equilibrium point with respect to the parameter $a_1$.\\

    So, we get that  $N$ has at most one equilibrium point for each parameter $a_i$, $i=1, 2, \dots, n$. Hence, the fuzzy soft negation $N$ has at most $n$ equilibrium points.
\end{proof}

Thus, by following the first part of the above proof, we can easily prove the following theorem:\\
\begin{theorem}
    If a fuzzy soft negation operation $N:A\times[0, 1] \to A\times[0, 1]$ has an equilibrium point with respect to a parameter $a \in A$, then the equilibrium point must be unique.
\end{theorem}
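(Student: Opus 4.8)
The plan is to extract the single-parameter argument already contained in the proof of Theorem 3.5 and present it on its own. Fix the parameter $a\in A$ and suppose, towards a contradiction, that $N$ admits two distinct equilibrium points with first coordinate $a$, say $(a, x)$ and $(a, y)$ with $x\neq y$. Without loss of generality assume $x < y$. By the definition of an equilibrium point (Definition 3.4) we have $N(a, x)=(a, x)$ and $N(a, y)=(a, y)$.

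Next I would invoke the order-reversing condition (ii) of the fuzzy soft negation (Definition 3.3): since $x\leq y$, it yields $N(a, x)\geq N(a, y)$. Substituting the two equilibrium identities gives $(a, x)\geq (a, y)$, and by Definition 3.2 this means exactly $x\geq y$. Together with the standing assumption $x < y$ this is a contradiction. Hence no two distinct equilibrium points can share the parameter $a$; equivalently, whenever an equilibrium point of $N$ with respect to $a$ exists, it is unique.

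I would add a short remark that this is precisely ``the first part of the proof'' of Theorem 3.5 (its Case I and Case II), restated without the informal difference notation $N(a, x)-(a, x)$ and relying solely on the order relation of Definition 3.2 — which is legitimate because both pairs being compared carry the same first coordinate $a$, so the comparison is defined. There is essentially no genuine obstacle here: the only point demanding care is that $\leq$ on $A\times[0, 1]$ is defined only between pairs with equal first coordinate, a hypothesis that holds throughout the argument; monotonicity of $N$ in the second argument at a fixed parameter does all the work, and neither the involutivity condition (iii) nor the boundary condition (i) of Definition 3.3 is needed for this statement.
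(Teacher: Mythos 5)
Your proof is correct and follows essentially the same route as the paper, which explicitly derives this theorem from the first part of the proof of Theorem 3.5 (the two cases there are exactly your monotonicity contradiction). Your restatement is in fact slightly cleaner, since it replaces the paper's informal difference notation $N(a,x)-(a,x)$ with a direct appeal to the order relation of Definition 3.2, which is legitimate here because both compared pairs share the first coordinate $a$.
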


\begin{definition}
    Let $T:(A\times[0, 1])\times (B\times[0, 1]) \to (A\times B)\times[0, 1]$ be a fuzzy soft t-norm. Then, $\forall a\in A$ and $\forall b\in B$, we have the following notions:\\
    
    (i) an element $x\in [0, 1]$ is called an idempotent element of $T$ if $T((a, x), (b, x))\\=((a, b), x)$ and the pair $((a, x), (b, x))$ is called the idempotent pair of the fuzzy soft t-norm $T$.\\
    
    (ii) an element $x\in [0, 1]$ is called a nilpotent element of $T$ if $T((a, x), (b, x))=((a, b), 0)$ and the pair $((a, x), (b, x))$ is called the nilpotent pair of the fuzzy soft t-norm $T$.\\
    
    (iii) an element $x\in (0, 1]$ is called a zero divisor of $T$ if there exists $y\in (0, 1]$ such that $T((a, x), (b, y))=((a, b), 0)$.\\
    
\end{definition}

\begin{theorem}
    Let $T$ be a fuzzy soft t-norm. Then, 0 and 1 are always idempotent elements of $T$.
\end{theorem}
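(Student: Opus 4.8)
The plan is to read off the case $x=1$ directly from the boundary (neutral-element) axioms of Definition 3.1, and to obtain the case $x=0$ by a squeezing argument that combines the monotonicity axiom with the fact that membership degrees are nonnegative.

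First I would fix arbitrary parameters $a\in A$ and $b\in B$ and recall what must be verified: by Definition 3.9, an element $x\in[0,1]$ is idempotent for $T$ precisely when $T((a,x),(b,x))=((a,b),x)$. For $x=1$, condition (i) of Definition 3.1 with $\beta=1$ (equivalently condition (ii) with $\alpha=1$) gives immediately $T((a,1),(b,1))=((a,b),1)$, so $1$ is an idempotent element. For $x=0$, the boundary axioms do not by themselves evaluate $T((a,0),(b,0))$, so I would instead invoke condition (v). Condition (ii) with $\alpha=0$ gives $T((a,0),(b,1))=((a,b),0)$, and since $0\leq 0$ and $0\leq 1$, condition (v) yields $T((a,0),(b,0))\leq T((a,0),(b,1))=((a,b),0)$. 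Writing $T((a,0),(b,0))=((a,b),t)$ with $t\in[0,1]$ (the first coordinate being $(a,b)$ is forced, since the order comparison in condition (v) is only meaningful between pairs with the same first coordinate, by Definition 3.3), the inequality $((a,b),t)\leq((a,b),0)$ means $t\leq 0$, whence $t=0$. Thus $T((a,0),(b,0))=((a,b),0)$ and $0$ is idempotent.

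Since $a\in A$ and $b\in B$ were arbitrary, both $0$ and $1$ are idempotent elements of $T$, which is the claim. The only real subtlety — the ``main obstacle,'' though a mild one — is the case $x=0$: unlike $x=1$ it cannot be settled by the neutral-element axioms alone, and one must use monotonicity together with the fact that the output membership value lies in $[0,1]$ (and record that the parameter coordinate of the output is always $(a,b)$, which is precisely what makes the inequality in condition (v) well posed).
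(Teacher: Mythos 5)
Your proof is correct and follows essentially the same route as the paper: the case $x=1$ is read off from the boundary axiom, and the case $x=0$ is obtained by squeezing $T((a,0),(b,0))$ below a value known to be $((a,b),0)$ via the monotonicity condition (the paper uses $T((a,1),(b,0))$ as the comparison point and phrases it as a contradiction, while you use $T((a,0),(b,1))$ directly, but this is an immaterial difference).
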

\begin{proof}
    Let $T:(A\times[0, 1])\times (B\times[0, 1]) \to (A\times B)\times[0, 1]$ be a fuzzy soft t-norm. So, $T((a, 1), (b, x))=((a, b), x)$, $\forall a\in A, b\in B$ and $x\in [0, 1] $. Thus, for $x=1$, we get $T((a, 1), (b, 1))=((a, b), 1)$,  $\forall a\in A, b\in B$. This, implies that 1 is an idempotent element of $T$.\\

    Now, we assume that 0 is not an idempotent element of $T$. Then, there exists some $x\in (0, 1]$ such that $T((a, 0), (b, 0))=((a, b), x)$. Also, we have $T((a, 1), (b, 0))=((a, b), 0)$. So, $T((a, 0), (b, 0))\leq T((a, 1), (b, 0))\implies ((a, b), x)\leq ((a, b), 0)$, which is a contradiction. Therefore, we get $x=0$. Hence, $T((a, 0), (b, 0))=((a, b), 0)$, i.e., 0 is an idempotent element of $T$. 
\end{proof}

\begin{theorem}
     In a fuzzy soft t-norm, every non-zero nilpotent element is zero divisor of that fuzzy soft t-norm.  
\end{theorem}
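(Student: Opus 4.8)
The plan is to derive the statement directly from the definitions of \emph{nilpotent element} and \emph{zero divisor} given just above, by producing an explicit witness rather than invoking any of the t-norm axioms. Let $T:(A\times[0, 1])\times (B\times[0, 1]) \to (A\times B)\times[0, 1]$ be a fuzzy soft t-norm and suppose $x\in[0,1]$ is a non-zero nilpotent element of $T$. By the definition of nilpotency we have $T((a, x), (b, x))=((a, b), 0)$ for all $a\in A$ and $b\in B$, and since $x\neq 0$ while $x\le 1$, we in fact have $x\in(0,1]$.

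The key step is simply to observe what the definition of a zero divisor demands: an element $x\in(0,1]$ is a zero divisor of $T$ provided there exists some partner $y\in(0,1]$ with $T((a, x), (b, y))=((a, b), 0)$. I would take $y=x$. This is admissible precisely because $x$ is non-zero (so $y=x\in(0,1]$), and substituting it into the nilpotency identity gives $T((a, x), (b, y))=T((a, x), (b, x))=((a, b), 0)$ for all $a\in A$, $b\in B$. Hence $x$ is a zero divisor of $T$.

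There is essentially no obstacle here: the argument uses none of conditions (i)--(v) of Definition 3.1 beyond what the hypothesis already encodes, and the only point requiring any care is the membership condition $y\in(0,1]$ in the definition of a zero divisor. That condition is exactly what forces the theorem to restrict to \emph{non-zero} nilpotent elements, since for $x=0$ the candidate $y=0$ is not allowed and the idempotence/nilpotence of $0$ (Theorem stating that $0$ is always idempotent) yields no information about zero divisors.
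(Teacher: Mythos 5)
Your proposal is correct and follows essentially the same route as the paper: both arguments take the witness $y=x$ in the definition of zero divisor and note that the nilpotency identity $T((a,x),(b,x))=((a,b),0)$ together with $x\in(0,1]$ immediately gives the conclusion. Your remark on why the non-zero hypothesis is needed is a helpful addition but does not change the substance.
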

\begin{proof}
    Let $x\in (0, 1]$ be a nilpotent element of a fuzzy soft t-norm $T:(A\times[0, 1])\times (B\times[0, 1]) \to (A\times B)\times[0, 1]$. Then, there exist $a\in A$ and $b\in B$ such that $T((a, x), (b, x))=((a, b), 0)$. It implies that $x$ is a zero divisor of $T$. Hence, every nilpotent element $x\in (0, 1]$ of a fuzzy soft t-norm $T$ is zero divisor of $T$.\\

It is important to note that converse of the above theorem is not true in general. Thus, we have the following example:\\

 \begin{example}
      Consider a fuzzy soft t-norm $T:(A\times[0, 1])\times (B\times[0, 1]) \to (A\times B)\times[0, 1]$ defined as $T((a, x), (b, y))=((a, b), max\{x+y-1, 0\})$. Now, if we take $x=0.8$ and $y=0.1$ then we get, $T((a, 0.8), (b, 0.1))=((a, b), 0)$. It implies that 0.8 and 0.1 are the zero divisors of the fuzzy soft t-norm $T$. But, $T((a, 0.1), (b, 0.1))=((a, b), 0)$ and $T((a, 0.8), (b, 0.8))=((a, b), 0.6)$. It suggests that 0.8 is not a nilpotent element of $T$. Therefore, it is not necessary for each zero divisor of $T$ to be a nilpotent element.
 \end{example}  
\end{proof}
\begin{theorem}
    Let $T, S:(A\times[0, 1])\times (B\times[0, 1]) \to (A\times B)\times[0, 1]$ be two operations such that, $T((a, x), (b, y))=((a, b), f(x, y))$ and $S((a, x), (b, y))=((a, b), g(x, y))$ where $f, g:[0, 1]\times [0, 1] \to [0, 1]$  are two functions such that $g(x, y)=1-f(1-x, 1-y)$. If $T$ is a fuzzy soft t-norm, then $S$ is a fuzzy soft t-conorm.
\end{theorem}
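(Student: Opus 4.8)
The plan is to reduce the claim to the classical one-dimensional duality between fuzzy t-norms and fuzzy t-conorms via the reflection $x \mapsto 1-x$, and then invoke Theorem 3.2.

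First I would unpack the hypothesis that $T$ is a fuzzy soft t-norm. Reading the conditions of Definition 3.1 through the representation $T((a,x),(b,y)) = ((a,b), f(x,y))$, and using that the parameter sets $A \times B$ and $B \times A$ (and likewise $A\times(B\times C)$ and $(A\times B)\times C$) are indistinguishable, conditions (i)--(v) translate into statements about $f$ alone: $f(1,y) = y$ and $f(x,1) = x$; $f(x,y) = f(y,x)$; $f(x,f(y,z)) = f(f(x,y),z)$; and $x_1 \le x_2$, $y_1 \le y_2$ imply $f(x_1,y_1) \le f(x_2,y_2)$. Specializing the last to $x_1 = x_2$ yields monotonicity in a single argument, so $f$ satisfies exactly the axioms of a fuzzy t-norm used in the proof of Theorem 3.1.

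Next I would verify that $g(x,y) = 1 - f(1-x,1-y)$ is a fuzzy t-conorm in the sense used in the proof of Theorem 3.2. The boundary condition holds since $g(x,0) = 1 - f(1-x,1) = 1 - (1-x) = x$ and, symmetrically, $g(0,y) = y$. Commutativity of $g$ follows from that of $f$. For associativity I would expand $g(x,g(y,z)) = 1 - f\bigl(1-x,\, f(1-y,1-z)\bigr)$ and $g(g(x,y),z) = 1 - f\bigl(f(1-x,1-y),\, 1-z\bigr)$ and observe that these coincide by associativity of $f$. For monotonicity, if $x \le y$ then $1-x \ge 1-y$, so $f(1-z,1-x) \ge f(1-z,1-y)$, giving $g(z,x) \le g(z,y)$. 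Hence $g$ is a fuzzy t-conorm, and since $S((a,x),(b,y)) = ((a,b), g(x,y))$, Theorem 3.2 immediately gives that $S$ is a fuzzy soft t-conorm.

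There is no deep difficulty in this argument; the points deserving care are bookkeeping ones: checking that the parameter components in Definition 3.1 genuinely decouple from the membership components, so that the conditions on $T$ reduce cleanly to conditions on $f$, and noting that condition (v) of Definition 3.1 (joint monotonicity) is equivalent, in the presence of commutativity, to the single-argument monotonicity appearing in the t-norm/t-conorm axioms. Once these are settled, the substitution $x \mapsto 1-x$ does all the work and everything else is routine computation. An alternative would be to verify the conditions of Definition 3.3 for $S$ directly, but routing through Theorem 3.2 is shorter.
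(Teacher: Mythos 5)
Your proposal is correct, and all of the essential computations (the boundary identities $g(x,0)=x$, $g(0,y)=y$, commutativity, associativity via $f(1-x,f(1-y,1-z))=f(f(1-x,1-y),1-z)$, and monotonicity from $1-x\ge 1-y$) match what the paper does. The one organizational difference is that you factor the argument through Theorem 3.2: you first extract from Definition 3.1 that $f$ is a fuzzy t-norm, show that $g=1-f(1-\cdot,1-\cdot)$ is then a fuzzy t-conorm at the scalar level, and let Theorem 3.2 lift this to the statement about $S$. The paper instead verifies the five conditions of Definition 3.3 for $S$ directly, re-deriving the needed properties of $f$ inline and carrying the parameter components $(a,b)$ through every step (using the indistinguishability of $A\times B$ and $B\times A$, etc., exactly as you note). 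Your route is shorter and avoids repeating the bookkeeping already done in Theorem 3.2; the paper's route is self-contained and makes the role of the parameter indistinguishability explicit at each stage. Your observation that condition (v) of Definition 3.1 specializes (by fixing one argument) to the single-variable monotonicity required of a fuzzy t-norm is the only point where care is genuinely needed, and you have handled it correctly.
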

\begin{proof}
    We assume that $T$ is a fuzzy soft t-norm. So, $T$ satisfies all the conditions of definition 3.1. Now, we have to show that $S((a, x), (b, y)) = ((a, b), g(x, y))$ is fuzzy soft t-conorm. To show this, we just need to prove that $S$ satisfies the five conditions of definition 3.3.\\

    (i) Since $T$ is a  fuzzy soft t-norm, we get
    $T((a, 1), (b, y))=((a, b), y)\implies ((a, b), f(1, y))=((a, b), y)$. It implies that $f(1, y)=y$, $\forall y\in [0, 1]$.\\

    Now, $S((a, 0), (b, y))=((a, b), g(0, y))=((a, b), 1-f(1, 1-y))=((a, b), 1-\{1-y\})=((a, b), y)$.\\
    
    (ii) Again, $T((a, x), (b, 1))=((a, b), x)\implies ((a, b), f(x, 1))=((a, b), x)$. It implies that $f(x, 1)=x$, $\forall x\in [0, 1]$.\\

    Now, $S((a, x), (b, 0))=((a, b), g(x, 0))=((a, b), 1-f(1-x, 1))=((a, b), 1-\{1-x\})=((a, b), x)$.\\

    (iii)  $T((a, x), (b, y))=T((b, y), (a, x))\implies ((a, b), f(x, y))= ((b, a), f(y, x))$, $\forall x, y \in [0, 1]$. Hence, $f(x, y)=f(y, x), \forall x, y \in [0, 1]$. Also if $x, y \in [0, 1]$ then $1-x, 1-y \in [0, 1]$. Therefore, $f(1-x, 1-y)=f(1-y, 1-x)$.\\

    Now, $S((a, x), (b, y))=((a, b), g(x, y))=((a, b), 1-f(1-x, 1-y))$ and $S((b, y), (a, x))=((b, a), g(y, x))=((b, a), 1-f(1-y, 1-x)$. We already know that the sets of parameters $A\times B$ and $B\times A$ are indistinguishable. Also we have, $f(1-x, 1-y)=f(1-y, 1-x)$. Thus, $1-f(1-x, 1-y)= 1-f(1-y, 1-x)$. Finally, we get $((a, b), 1-f(1-x, 1-y)=((b, a), 1-f(1-y, 1-x)$, i.e., $S((a, x), (b, y))=S((b, y), (a, x))$.\\

    (iv) For all $x, y, z\in [0, 1]$, we get $T((a, x), T((b, y), (c, z)))=T(T((a, x), (b, y)), (c, z))$. It implies that $T((a, x), ((b, c), f(y, z))) = T(((a, b), \\f(x, y)), (c, z))$. This implies that $((a, (b, c)), f(x, f(y, z)))=(((a, b), c), f(f(x, y)\\, z))$. Hence, we get $f(x, f(y, z))=f(f(x, y), z)$, $\forall x, y, z \in [0, 1]$.\\

    Now, $S((a, x), S((b, y), (c, z)))=S((a, x), ((b, c), g(y, z)))=((a, (b, c)), g(x,\\ g(y, z)))$. Here, $g(x, g(y, z))=g(x, 1-f(1-y, 1-z))$. Now, we put $f(1-y, 1-z)=M$. So, we get $g(x, g(y, z))=g(x, 1-M)=1-f(1-x, 1-(1-M))=1-f(1-x, M)=1-f(1-x, f(1-y, 1-z))$.\\

    Again, $S(S((a, x), (b, y)), (c, z))=S(((a, b), g(x, y)), (c, z))=(((a, b), c), g(g\\(x, y), z))=(((a, b), c), g(1-f(1-x, 1-y), z))$. Now, we put $f(1-x, 1-y)=N$. So, we get $g(1-f(1-x, 1-y), z)=g(1-N, z)=1-f(1-(1-N), 1-z)=1-f(N, 1-z)=1-f(f(1-x, 1-y), 1-z)$.\\

    Since $f(x, f(y, z))=f(f(x, y), z)$, $\forall x, y, z \in [0, 1]$, thus $f(1-x, f(1-y, 1-z))=f(f(1-x, 1-y), 1-z)$. It implies  $1-f(1-x, f(1-y, 1-z))=1-f(f(1-x, 1-y), 1-z)$ and hence, $g(x, g(y, z))=g(g(x, y), z)$. Since the sets of parameters $A\times(B\times C)$ and $(A\times B)\times C$ are indistinguishable, we can write that $S((a, x), S((b, y), (c, z))) = S(S((a, x), (b, y)), (c, z))$.\\

    (v) We consider four real numbers $x_1, x_2, y_1, y_2 \in [0, 1]$ such that $x_1 \leq x_2$ and $y_1 \leq y_2$. Since $T$ is a fuzzy soft t-norm, we have $T((a, x_1), (b, y_1)) \leq T((a, x_2), (b, y_2))$. It implies that $((a, b), f(x_1, y_1))\leq ((a, b), f(x_2, y_2))$. Thus, $f(x_1, y_1))\leq f(x_2, y_2)$.\\

    Now, we have to show that $S((a, x_1), (b, y_1)) \leq S((a, x_2), (b, y_2))$. We have, $f(x_1, y_1))\leq f(x_2, y_2)$. It implies $  f(1-x_1, 1-y_1))\geq f(1-x_2, 1-y_2)$. So, $ 1-f(1-x_1, 1-y_1))\leq 1-f(1-x_2, 1-y_2)$. It gives $  g(x_1, y_1) \leq g(x_2, y_2)$. Hence, $((a, b), g(x_1, y_1))\leq ((a, b), g(x_2, y_2))$. Therefore, $S((a, x_1), (b, y_1)) \leq S((a, x_2), (b, y_2))$.     Hence, $S$ is a fuzzy soft t-conorm.
    
\end{proof}

Now, we provide the following definition of fuzzy soft implication: 

\begin{definition}
    An operation $K:(A\times[0, 1])\times (B\times[0, 1]) \to (A\times B)\times[0, 1]$ is said to be a fuzzy soft implication if it satisfies the following conditions:

    (i) if $x\leq z$, then $K((a, x), (b, y))\geq K((a, z), (b, y))$;

    (ii) if $y\leq z$, then $K((a, x), (b, y))\leq K((a, x), (b, z))$;

    (iii) $K((a, 1), (b, y))=((a, b), y)$, $\forall y \in [0, 1]$;

    (iv) $K((a, 0), (b, y))=((a, b), 1)$, $\forall y \in [0, 1]$;

    (v) $K((a, x), K((b, y), (c, z)))=K((b, y), K((a, x), (c, z)))$.\\

    Here, $a\in A, b\in B, c\in C$ and $x, y, z \in [0, 1]$.
\end{definition}

\begin{example}
    Let $K:(A\times[0, 1])\times (B\times[0, 1]) \to (A\times B)\times[0, 1]$ be an operation defined as $K((a, x), (b, y))=((a, b), min\{1, 1-x+y\})$. Then, $K$ is a fuzzy soft implication.
\end{example}

\begin{theorem}
    An operation $K:(A\times[0, 1])\times (B\times[0, 1]) \to (A\times B)\times[0, 1]$ is defined as $K((a, x), (b, y))= ((a, b), h(x, y))$. Then, the operation $K$ is said to be a fuzzy soft implication if the function $h:[0,1]\times [0, 1] \to [0, 1]$ is a fuzzy implication.
\end{theorem}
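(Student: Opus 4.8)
The plan is to assume that $h:[0,1]\times[0,1]\to[0,1]$ is a fuzzy implication in the sense of Definition 2.6 and then verify, one by one, the five conditions of Definition 3.15 for the operation $K((a,x),(b,y))=((a,b),h(x,y))$. The guiding observation is that each condition of Definition 3.15 is the ``parametrised'' counterpart of exactly one property of $h$, so in each case the verification amounts to expanding $K$ by its definition, applying the matching property of $h$ to the membership components, and invoking Definition 3.2 (together with the indistinguishability conventions for products of parameter sets adopted at the start of this section) to pass from the statement about real numbers to the statement about elements of $(A\times B)\times[0,1]$.

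First I would dispose of conditions (i)--(iv), which are purely ``one-slot'' statements and require no manipulation of the parameter part. For (i), if $x\leq z$ then Definition 2.6(i) gives $h(x,y)\geq h(z,y)$; since $K((a,x),(b,y))=((a,b),h(x,y))$ and $K((a,z),(b,y))=((a,b),h(z,y))$ share the parameter part $(a,b)$, Definition 3.2 yields $K((a,x),(b,y))\geq K((a,z),(b,y))$. Condition (ii) is handled identically, using instead the monotonicity of $h$ in its second argument, Definition 2.6(ii). Conditions (iii) and (iv) are direct substitutions: $K((a,1),(b,y))=((a,b),h(1,y))=((a,b),y)$ by Definition 2.6(iii), and $K((a,0),(b,y))=((a,b),h(0,y))=((a,b),1)$ by Definition 2.6(iv).

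The real content is condition (v), the exchange law, and this is the step I expect to be the only delicate one. Here I would compute both sides explicitly. On the left, $K((b,y),(c,z))=((b,c),h(y,z))$, hence $K((a,x),K((b,y),(c,z)))=\bigl((a,(b,c)),\,h(x,h(y,z))\bigr)$. On the right, $K((a,x),(c,z))=((a,c),h(x,z))$, hence $K((b,y),K((a,x),(c,z)))=\bigl((b,(a,c)),\,h(y,h(x,z))\bigr)$. The membership components coincide by Definition 2.6(v), namely $h(x,h(y,z))=h(y,h(x,z))$. What remains is to justify that the parameter components $(a,(b,c))$ and $(b,(a,c))$ may be identified: this follows from the conventions recalled in this section, that $A\times(B\times C)$, $(A\times B)\times C$ and $A\times B$ versus $B\times A$ are all indistinguishable in soft set theory and its hybrid structures, so that $A\times(B\times C)\equiv (A\times B)\times C\equiv (B\times A)\times C\equiv B\times(A\times C)$. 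Once the parameter parts are identified and the membership parts are equal, condition (v) holds, and therefore $K$ satisfies all five conditions of Definition 3.15, i.e. $K$ is a fuzzy soft implication. The only genuine obstacle is thus conceptual rather than computational: being careful that the reassociation and reordering of the parameter sets in (v) is exactly what the indistinguishability convention permits; everything else is routine substitution.
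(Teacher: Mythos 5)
Your proposal is correct and follows essentially the same route as the paper's own proof: conditions (i)--(iv) by direct substitution and the monotonicity of $h$, and condition (v) by computing both sides to get $((a,(b,c)), h(x,h(y,z)))$ and $((b,(a,c)), h(y,h(x,z)))$, then combining the exchange property $h(x,h(y,z))=h(y,h(x,z))$ with the indistinguishability of the products of parameter sets. Your explicit chain identifying $A\times(B\times C)$ with $B\times(A\times C)$ is, if anything, slightly more careful than the paper's one-line appeal to the same convention.
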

\begin{proof}
    Let $h:[0,1]\times [0, 1] \to [0, 1]$ be a fuzzy implication. So, it satisfies conditions of definition 2.6.\\

(I) Let $x\leq z$. Now, $K((a, x), (b, y)) = ((a, b), h(x, y)) \geq ((a, b), h(z, y)) = K((a, z), (b, y))$. So, $K((a, x), (b, y))\geq K((a, z), (b, y))$.\\

(II) Let $y\leq z$. Now, $K((a, x), (b, y)) = ((a, b), h(x, y) \leq ((a, b), h(x, z)) = K((a, x), (b, z))$. So, $K((a, x), (b, y))\leq K((a, x), (b, z))$.\\

(III) $K((a, 1), (b, y)) = ((a, b), h(1, y)) = ((a, b), y)$.\\

(IV) $K((a, 0), (b, y) ) = ((a, b), h(0, y)) = ((a, b), 1)$.\\

(V) We have $K((a, x), K((b, y), (c, z)))=K((a, x), ((b, c), h(y, z)))=((a, (b, c)), \\h(x, h(y, z)))$ and $K((b, y), K((a, x), (c, z)))=K((b, y), ((a, c), h(x, z)))=((b, (a, c)),\\ h(y, h(x, z)))$. We already know that the sets of parameters $A\times (B\times C)$ and $B\times (A\times C)$ are indistinguishable in soft theory and fuzzy soft theory, since $A\times B$ and $B\times A$ are indistinguishable. Thus, using the above condition (v), we get $K((a, x), K((b, y), (c, z)))=((b, (a, c)), h(y, h(x, z)))$.\\

Since $K$ satisfies all the conditions of definition 3.9, thus $K$ is a fuzzy soft implication.\\
    
\end{proof}

\section{Conclusion}

In this paper, we addressed the foundational incorrectness in Ali and Shabir's \cite{6} logical connectives of fuzzy soft set theory. The definitions introduced by Ali and Shabir \cite{6} deviated from the principles laid out in Molodtsov's original frameworks \cite{1,2,3,4,10,12}. Through this paper, we proposed revised definitions of t-norm, t-conorm, fuzzy soft negation, and fuzzy soft implication. We named them fuzzy soft t-norm, fuzzy soft t-conorm, fuzzy soft negation, and fuzzy soft negation. These definitions are consistent with the core philosophy of soft set theory and its hybrid structures with fuzzy set theory. Our contributions not only rectify inconsistencies in earlier approaches but also pave the way for a more systematic and reliable framework of logical connectives in fuzzy soft set theory. We provided suitable examples and proved several theorems. We also established a relation between fuzzy soft t-norm and fuzzy soft t-conorm. As previously mentioned, Molodtsov's cautionary advice regarding the hybrid structures of soft set theory and fuzzy set theory has been acknowledged. Therefore, in this paper, we have limited our focus to presenting accurate definitions, examples, and results, without delving into any applications.\\

\noindent
These advancements enhance the logical foundation of fuzzy soft set theory while expanding its potential applications in areas such as decision-making, optimization, control systems, data analysis, and more. Future research may focus on extending these concepts further, exploring their computational implementations, and investigating their applicability across diverse domains. Such efforts would help unlock the full potential of accurate logical connectives in fuzzy soft set theory as a powerful tool for managing uncertainty in complex systems.\\

{\bf Competing interests:} The authors declare that there is no competing interest.\\

{\bf Funding:} The authors did not receive any funding for this research work. 

\end{document}